\documentclass[11pt]{article}

\usepackage[letterpaper,margin=1.1in]{geometry}
\usepackage[T1]{fontenc}
\usepackage{amsmath,amsthm,amssymb}
\usepackage{newpxtext}
\usepackage{newpxmath}
\usepackage{mathtools}
\usepackage{microtype}
\usepackage[hidelinks]{hyperref}
\usepackage[nameinlink,noabbrev]{cleveref}

\numberwithin{equation}{section}
\allowdisplaybreaks

\newtheorem{theorem}{Theorem}[section]
\newtheorem{proposition}[theorem]{Proposition}
\newtheorem{lemma}[theorem]{Lemma}
\newtheorem{corollary}[theorem]{Corollary}
\theoremstyle{definition}
\newtheorem{definition}[theorem]{Definition}

\theoremstyle{remark}

\crefname{theorem}{theorem}{theorems}
\crefname{proposition}{proposition}{propositions}
\crefname{lemma}{lemma}{lemmas}
\crefname{corollary}{corollary}{corollaries}
\crefname{definition}{definition}{definitions}
\crefname{example}{example}{examples}
\crefname{remark}{remark}{remarks}

\newcommand{\SP}{\mathrm{SP}}
\newcommand{\OP}{\mathrm{OP}}

\newcommand{\cQ}{\mathcal{Q}}

\newcommand{\Pf}{\operatorname{Pf}}

\newcommand{\Gam}{\Gamma}
\newcommand{\rhoop}[1]{\rho_{#1}}
\newcommand{\Trel}[2]{T_{#1,#2}}
\newcommand{\diag}{\operatorname{diag}}
\newcommand{\wt}{\operatorname{wt}}

\title{Transition Matrices between Shifted
\texorpdfstring{$t$}{t}-Schur Bases and Cyclotomic Schur
\texorpdfstring{$Q$}{Q}-Positivity}
\author{S.-J. Lee}
\date{}

\begin{document}

\maketitle

\begin{abstract}
For a strict partition $\lambda$, let
$\cQ_\lambda(X;t)=Q_\lambda[X-tX]$ be the shifted
$t$-Schur function arising from the modified Greaves--Jing--Zhu
operator on the odd power-sum ring. We study transition matrices
between the shifted bases with parameters $t$ and $s$. The relative
scaling operator is diagonal in the odd power-sum basis, leading to
explicit spectral data, determinant and trace formulas, weighted
symmetry, a spin-character formula, and a transition Cauchy identity.

For the cyclotomic specialization
$C_{\lambda\mu}^{[M]}(t)=C_{\lambda\mu}(t^M,t)$, the relative
operator becomes plethystic substitution by
$1+t+\cdots+t^{M-1}$. We prove Schur $Q$-positivity and reciprocity,
derive factorization and root-of-unity rank formulas, and give an
exact computation method. For $M=2$, all one-row transitions are
computed explicitly, and the nonzero coefficients are unimodal.
\end{abstract}

\section{Introduction}

Greaves, Jing, and Zhu introduced operator constructions for the
$t$-Schur functions and the $t$-Schur measure \cite{GreavesJingZhu}.
Lee subsequently considered the odd-power-sum analogue of this
construction, with symmetric functions indexed by strict partitions
\cite{Lee}. The main functions used in the present paper are the
shifted $t$-Schur functions
\begin{equation}\label{eq:intro2-pleth}
 \cQ_\lambda(X;t)=Q_\lambda[X-tX],
\end{equation}
where $Q_\lambda$ denotes the Schur $Q$-function attached to the
strict partition $\lambda$. For each parameter $t$, these functions
form a basis of the odd power-sum ring after extending scalars to
$\mathbb Q(t)$. This naturally leads to the question of how the bases
for two different parameters are related.

We define the transition coefficients by
\begin{equation}\label{eq:intro2-transition}
 \cQ_\lambda(X;t)
 =\sum_{\mu\in\SP(|\lambda|)}
  C_{\lambda\mu}(t,s)\cQ_\mu(X;s).
\end{equation}
The relative operator behind this change of basis is
\[
 T_{t,s}=\rho_s^{-1}\rho_t,
 \qquad
 \rho_t(p_n)=(1-t^n)p_n.
\]
Thus
\[
 T_{t,s}(p_n)=\frac{1-t^n}{1-s^n}p_n
 \qquad(n\text{ odd}).
\]
This diagonal action gives a direct way to compute the transition
matrices. In homogeneous degree $N$, their eigenvalues are indexed by
odd partitions $\alpha$ of $N$:
\begin{equation}\label{eq:intro2-eigen}
 r_\alpha(t,s)
 =\prod_{a\in\alpha}\frac{1-t^a}{1-s^a}.
\end{equation}
From this description we derive the cocycle identity, simultaneous
diagonalization, characteristic-polynomial, determinant, trace,
weighted-symmetry, and spin-character formulas. We also include a
transition Cauchy identity which records all transition coefficients
at once.

The cyclotomic specialization is especially concrete. Put
\[
 C_{\lambda\mu}^{[M]}(t)
 :=C_{\lambda\mu}(t^M,t).
\]
Then the transition identity becomes
\begin{equation}\label{eq:intro2-cyclo}
 Q_\lambda[X+tX+\cdots+t^{M-1}X]
 =\sum_\mu C_{\lambda\mu}^{[M]}(t)Q_\mu(X).
\end{equation}
Thus the cyclotomic transition coefficients describe the Schur
$Q$-expansion after substituting the finite alphabet
\[
 X\mapsto X+tX+\cdots+t^{M-1}X.
\]
Using the shifted Littlewood--Richardson rule, we prove
\[
 C_{\lambda\mu}^{[M]}(t)\in\mathbb N[t].
\]
The reversal symmetry of the finite alphabet
$1+t+\cdots+t^{M-1}$ gives the reciprocity formula
\[
 C_{\lambda\mu}^{[M]}(t)
 =t^{(M-1)|\lambda|}C_{\lambda\mu}^{[M]}(t^{-1}),
\]
and hence the coefficients are palindromic.

We also record several structural consequences of the same
specialization. The matrices $C_N^{[M]}(t)$ satisfy a multiplicative
factorization in the integer $M$. Their determinants factor into
cyclotomic polynomials, and their ranks at roots of unity can be read
off from odd partitions. These results give an explicit description
of the transition matrices in each homogeneous degree.

The transition problem considered here is related to other
multiparameter constructions for Schur $Q$-functions, such as
Ivanov's interpolation Schur $Q$-functions \cite{Ivanov}. It is also
compatible with the general viewpoint that changes of variables can be
studied through vertex-operator presentations; see, for example,
\cite{Rozhkovskaya}. The present paper focuses on the diagonal
plethystic scaling in \eqref{eq:intro2-pleth} and on the positivity
properties coming from the finite cyclotomic alphabet.

For $M=2$, we obtain a complete one-row formula. If
$0\leq j<n/2$, then
\begin{equation}\label{eq:intro2-one-row}
 C_{(n),(n-j,j)}^{[2]}(t)
 =t^j(1+t)[n-2j]_t,
 \qquad [r]_t=1+t+\cdots+t^{r-1},
\end{equation}
and transitions to strict partitions of length greater than two
vanish. This gives nonnegative, palindromic, and unimodal
coefficients.

The paper is organized as follows. In
\cref{sec:transition-preliminaries} we review Schur $Q$-functions and
define the shifted bases by diagonal plethystic scaling. General
transition matrices are studied in \cref{sec:transition}. The
cyclotomic specialization is developed in \cref{sec:cyclotomic}.
The one-row formula for $M=2$ appears in \cref{sec:one-row}, and
\cref{sec:computation} gives an exact computational procedure and
low-degree examples.

\section{Schur \texorpdfstring{$Q$}{Q}-functions and shifted bases}
\label{sec:transition-preliminaries}

\subsection{The odd power-sum ring}

A \emph{strict partition} is a finite sequence
$\lambda=(\lambda_1>\cdots>\lambda_\ell>0)$. We write
$|\lambda|=\lambda_1+\cdots+\lambda_\ell$ and
$\ell(\lambda)=\ell$. Let $\SP(N)$ denote the set of strict
partitions of $N$. Let $\OP(N)$ denote the set of partitions of $N$
all of whose parts are odd. Euler's identity gives
$|\SP(N)|=|\OP(N)|$.

Let
\[
 \Gam=\mathbb{Q}[p_1,p_3,p_5,\ldots]
\]
be the odd power-sum ring, graded by $\deg p_n=n$. For a partition
$\alpha=(1^{m_1}3^{m_3}\cdots)\in\OP(N)$, set
\[
 p_\alpha=\prod_i p_{\alpha_i},
 \qquad
 z_\alpha=\prod_{r\geq1}r^{m_r}m_r!.
\]
The elements $p_\alpha$, $\alpha\in\OP(N)$, form a basis of the
homogeneous component $\Gam_N$.

We use the standard Schur $Q$ scalar product
\begin{equation}\label{eq:Q-inner-product}
 \langle p_\alpha,p_\beta\rangle
 =2^{-\ell(\alpha)}z_\alpha\,\delta_{\alpha\beta}.
\end{equation}
The Schur $Q$-functions $Q_\lambda$, $\lambda\in\SP(N)$, form another
basis of $\Gam_N$. We set
\[
 P_\lambda=2^{-\ell(\lambda)}Q_\lambda.
\]
Then
\begin{equation}\label{eq:PQ-duality}
 \langle Q_\lambda,P_\mu\rangle=\delta_{\lambda\mu},
 \qquad
 \langle Q_\lambda,Q_\mu\rangle
 =2^{\ell(\lambda)}\delta_{\lambda\mu}.
\end{equation}
We follow the standard conventions in \cite[Chapter III, Section
8]{Macdonald}; see also \cite{JingSpin,Stembridge}.

The one-row functions $q_r=Q_{(r)}$ are determined by
\begin{equation}\label{eq:one-row-generating}
 Q_X(z)
 :=\sum_{r\geq0}q_r(X)z^r
 =\exp\!\left(
    2\sum_{\substack{n\geq1\\n\text{ odd}}}
      \frac{p_n(X)}{n}z^n
   \right)
 =\prod_i\frac{1+x_i z}{1-x_i z}.
\end{equation}
Here $q_0=1$ and $q_r=0$ for $r<0$. For $r\geq s\geq0$, define
\begin{equation}\label{eq:two-row-classical}
 Q_{(r,s)}
 =q_rq_s+2\sum_{k=1}^{s}(-1)^kq_{r+k}q_{s-k}.
\end{equation}
In particular, $Q_{(r,0)}=q_r$ and $Q_{(r,r)}=0$. If $\lambda$ has
odd length, append a zero part. The Pfaffian Giambelli identity is
\begin{equation}\label{eq:Q-Pfaffian}
 Q_\lambda
 =\Pf\bigl(Q_{(\lambda_i,\lambda_j)}\bigr).
\end{equation}

For later use, define coefficients $\chi_\alpha^\lambda$ by the
power-sum expansion
\begin{equation}\label{eq:spin-expansion}
 Q_\lambda
 =\sum_{\alpha\in\OP(|\lambda|)}
   \frac{2^{\ell(\alpha)}}{z_\alpha}
   \chi_\alpha^\lambda p_\alpha.
\end{equation}
With the usual normalization, these are the spin-character
coefficients. Their representation-theoretic interpretation will not
be needed; only \eqref{eq:spin-expansion} and the orthogonality implied
by \eqref{eq:Q-inner-product} will be used.

\subsection{Diagonal plethystic scaling}

Let $t$ be an indeterminate and extend scalars to $\mathbb Q(t)$.
Define the graded algebra automorphism
\begin{equation}\label{eq:rho-definition}
 \rho_t:\Gam_{\mathbb Q(t)}\longrightarrow\Gam_{\mathbb Q(t)},
 \qquad
 \rho_t(p_n)=(1-t^n)p_n
 \quad(n\text{ odd}).
\end{equation}
Its inverse is given by
\[
 \rho_t^{-1}(p_n)=\frac{1}{1-t^n}p_n.
\]
The shifted $t$-Schur functions associated with the modified odd
Greaves--Jing--Zhu field may be written as
\begin{equation}\label{eq:shifted-basis-definition}
 \cQ_\lambda(X;t)
 :=\rho_tQ_\lambda(X)
 =Q_\lambda[X-tX].
\end{equation}
Indeed, in plethystic notation,
\[
 p_n[X-tX]=(1-t^n)p_n[X].
\]
This is equivalent to the neutral-vertex-operator definition in
\cite{Lee}.

Applying $\rho_t$ to \eqref{eq:spin-expansion} gives
\begin{equation}\label{eq:power-sum-shifted}
 \cQ_\lambda(X;t)
 =\sum_{\alpha\in\OP(|\lambda|)}
   \frac{2^{\ell(\alpha)}}{z_\alpha}
   \chi_\alpha^\lambda
   \prod_{a\in\alpha}(1-t^a)p_\alpha(X).
\end{equation}
Hence $\cQ_\lambda(X;t)$ is polynomial in $t$. Over
$\mathbb Q(t)$, the map $\rho_t$ is invertible, so
\[
 \{\cQ_\lambda(X;t):\lambda\in\SP(N)\}
\]
is a basis of $\Gam_N\otimes_{\mathbb Q}\mathbb Q(t)$.

Every odd power sum is primitive for the standard Hopf structure on
$\Gam$. Therefore $\rho_t$ is a graded Hopf algebra automorphism over
$\mathbb Q(t)$. The multiplication and coproduct structure constants
within a fixed shifted basis are consequently transported from the
classical Schur $Q$ theory. This leads naturally to transition
matrices between two shifted bases with different parameters.

\section{Transition matrices}
\label{sec:transition}

\subsection{Relative diagonal scaling}

Let $t$ and $s$ be independent indeterminates. Define
\begin{equation}\label{eq:T-definition}
 \Trel{t}{s}=\rhoop{s}^{-1}\rhoop{t}.
\end{equation}
Then
\begin{equation}\label{eq:T-on-p}
 \Trel{t}{s}(p_n)
 =r_n(t,s)p_n,
 \qquad
 r_n(t,s)=\frac{1-t^n}{1-s^n}
 \quad(n\text{ odd}).
\end{equation}

\begin{definition}\label{def:transition-coefficients}
For $\lambda\in\SP(N)$, define $C_{\lambda\mu}(t,s)$ by
\begin{equation}\label{eq:C-definition}
 \Trel{t}{s}Q_\lambda
 =\sum_{\mu\in\SP(N)}
  C_{\lambda\mu}(t,s)Q_\mu.
\end{equation}
Let
\[
 C_N(t,s)
 =\bigl(C_{\lambda\mu}(t,s)\bigr)_{
      \lambda,\mu\in\SP(N)},
\]
with both rows and columns ordered in the same fixed order.
\end{definition}

Applying $\rhoop{s}$ to \eqref{eq:C-definition} gives the equivalent
basis-transition identity
\begin{equation}\label{eq:shifted-basis-transition}
 \cQ_\lambda(X;t)
 =\sum_{\mu\in\SP(N)}
  C_{\lambda\mu}(t,s)\cQ_\mu(X;s).
\end{equation}

\begin{proposition}[Cocycle identity]\label{prop:cocycle}
For parameters $t,s,r$,
\begin{equation}\label{eq:cocycle}
 C_N(t,r)=C_N(t,s)C_N(s,r).
\end{equation}
In particular,
\begin{equation}\label{eq:C-inverse}
 C_N(t,t)=I,
 \qquad
 C_N(t,s)^{-1}=C_N(s,t).
\end{equation}
\end{proposition}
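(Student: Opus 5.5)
The plan is to read the identity off the operator level. The first step is to check the operator cocycle
\[
 \Trel{t}{r}=\Trel{s}{r}\Trel{t}{s}.
\]
This is immediate from \eqref{eq:T-definition}, since
$\rhoop{r}^{-1}\rhoop{s}\rhoop{s}^{-1}\rhoop{t}=\rhoop{r}^{-1}\rhoop{t}$.
Alternatively it can be checked on generators via \eqref{eq:T-on-p}: $r_n(s,r)\,r_n(t,s)=r_n(t,r)$, and all three maps are algebra automorphisms of $\Gam$ over a common field $\mathbb Q(t,s,r)$.

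Next I will translate this into matrices. By \cref{def:transition-coefficients}, $C_N(t,s)$ is the matrix of $\Trel{t}{s}$ on $\Gam_N$ in the basis $\{Q_\mu\}$, with the convention that row $\lambda$ lists the coefficients of the image of $Q_\lambda$. I will apply $\Trel{t}{r}=\Trel{s}{r}\Trel{t}{s}$ to $Q_\lambda$. Expanding first by \eqref{eq:C-definition}, and then using the linearity of $\Trel{s}{r}$ together with \eqref{eq:C-definition} again, gives
\[
 \Trel{t}{r}Q_\lambda=\sum_{\nu}C_{\lambda\nu}(t,s)\sum_\mu C_{\nu\mu}(s,r)Q_\mu .
\]
Comparing coefficients in the basis $\{Q_\mu\}$ then yields $C_{\lambda\mu}(t,r)=\sum_\nu C_{\lambda\nu}(t,s)C_{\nu\mu}(s,r)$, which is \eqref{eq:cocycle}. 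The only point needing care is the order of the factors. With row-vector conventions, composition $B\circ A$ corresponds to the product $[A][B]$, and this is exactly why the product appears as $C_N(t,s)C_N(s,r)$.

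For \eqref{eq:C-inverse}, I will note that $\Trel{t}{t}=\mathrm{id}$, so $C_N(t,t)=I$. Setting $r=t$ in \eqref{eq:cocycle} gives $C_N(t,s)C_N(s,t)=I$. Since these are square matrices over a field, $C_N(t,s)^{-1}=C_N(s,t)$.

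The main (mild) obstacle is interpretive rather than computational. Each $C_N(\cdot,\cdot)$ lives a priori over a different field of rational functions. I will handle this by working throughout over $\mathbb Q(t,s,r)$, where all three operators are defined. Here $1-s^n\neq0$, and the specialization $r=t$ is legitimate because the entries are rational functions in which $r$ appears only through the denominators $1-r^n$, which remain nonzero after substituting $r=t$.
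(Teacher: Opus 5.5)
Your proposal is correct and follows essentially the same route as the paper: you apply the operator identity $\Trel{s}{r}\Trel{t}{s}=\Trel{t}{r}$ to $Q_\lambda$ and read off the matrix product under the row convention, then specialize (using $\Trel{t}{t}=\mathrm{id}$ and $r=t$) to get the inverse statements. Your extra care in working over the common field $\mathbb{Q}(t,s,r)$ and justifying the substitution $r=t$ is a harmless refinement of what the paper leaves implicit.
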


\begin{proof}
The operators satisfy
\[
 \Trel{s}{r}\Trel{t}{s}
 =\rhoop{r}^{-1}\rhoop{s}
  \rhoop{s}^{-1}\rhoop{t}
 =\Trel{t}{r}.
\]
Apply both sides to $Q_\lambda$. By
\eqref{eq:C-definition}, the coefficient of $Q_\nu$ on the left is
\[
 \sum_\mu C_{\lambda\mu}(t,s)C_{\mu\nu}(s,r),
\]
which proves \eqref{eq:cocycle}. The statements in
\eqref{eq:C-inverse} follow by setting $r=t$ or $s=t$.
\end{proof}

\subsection{Simultaneous diagonalization}

For $\alpha=(\alpha_1,\ldots,\alpha_k)\in\OP(N)$, set
\begin{equation}\label{eq:r-alpha}
 r_\alpha(t,s)
 =\prod_{i=1}^{k}r_{\alpha_i}(t,s)
 =\prod_{a\in\alpha}\frac{1-t^a}{1-s^a}.
\end{equation}
By \eqref{eq:T-on-p},
\begin{equation}\label{eq:T-palpha}
 \Trel{t}{s}(p_\alpha)=r_\alpha(t,s)p_\alpha.
\end{equation}

\begin{theorem}[Spectral data]\label{thm:spectral-data}
For every $N\geq0$,
\begin{equation}\label{eq:characteristic-polynomial}
 \det\bigl(uI-C_N(t,s)\bigr)
 =\prod_{\alpha\in\OP(N)}
   \bigl(u-r_\alpha(t,s)\bigr).
\end{equation}
Consequently,
\begin{align}
 \det C_N(t,s)
 &=\prod_{\alpha\in\OP(N)}r_\alpha(t,s),
 \label{eq:det-general}\\
 \operatorname{tr}C_N(t,s)
 &=\sum_{\alpha\in\OP(N)}r_\alpha(t,s).
 \label{eq:trace-general}
\end{align}
All matrices $C_N(t,s)$ commute with one another.
\end{theorem}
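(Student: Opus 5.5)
The plan is to observe that $C_N(t,s)$ is the matrix of the linear operator $\Trel{t}{s}$ restricted to $\Gam_N\otimes\mathbb Q(t,s)$, expressed in the basis $\{Q_\lambda\}_{\lambda\in\SP(N)}$. One small point is the convention: \eqref{eq:C-definition} puts the coefficients of $\Trel{t}{s}Q_\lambda$ in the row indexed by $\lambda$. So $C_N(t,s)$ is the \emph{transpose} of the usual matrix of $\Trel{t}{s}$. This does not matter, since characteristic polynomial, determinant and trace are transpose-invariant. Commutation is also preserved, because $A^TB^T=(BA)^T$.

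Next, I will change basis to the odd power sums. Let $P$ be the transition matrix from $\{Q_\lambda\}$ to $\{p_\alpha\}_{\alpha\in\OP(N)}$ in $\Gam_N$. Concretely, by \eqref{eq:spin-expansion} its entries are $2^{\ell(\alpha)}\chi^\lambda_\alpha/z_\alpha$. Since both families are bases of $\Gam_N$ and $|\SP(N)|=|\OP(N)|$, $P$ is a square invertible matrix with rational entries, independent of $t,s$. By \eqref{eq:T-palpha}, the operator $\Trel{t}{s}$ is diagonal in the $p_\alpha$ basis, with eigenvalues $r_\alpha(t,s)$. Writing $D(t,s)=\diag(r_\alpha(t,s))_{\alpha\in\OP(N)}$, I then get
\[
 C_N(t,s)=P\,D(t,s)\,P^{-1}.
\]
To verify this identity, expand $\Trel{t}{s}Q_\lambda=\sum_\alpha P_{\lambda\alpha}r_\alpha p_\alpha$ and then re-expand each $p_\alpha$ in the $Q$ basis using $P^{-1}$. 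Only the row-versus-column convention needs care here.

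From this similarity, $\det(uI-C_N(t,s))=\det(uI-D(t,s))=\prod_\alpha(u-r_\alpha(t,s))$, which is \eqref{eq:characteristic-polynomial}. Setting $u=0$ and accounting for the sign $(-1)^{|\OP(N)|}$ on both sides gives \eqref{eq:det-general}. Comparing the coefficients of $u^{|\OP(N)|-1}$ gives \eqref{eq:trace-general}. Finally, all the $C_N(t,s)$ are conjugated by the \emph{same} matrix $P$ into diagonal matrices, and diagonal matrices commute. Hence $C_N(t,s)C_N(t',s')=PDD'P^{-1}=C_N(t',s')C_N(t,s)$ for any parameter pairs, working over a common field such as $\mathbb Q(t,s,t',s')$.

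There is no serious obstacle. The only step needing attention is the bookkeeping of the transpose convention in \eqref{eq:C-definition}, together with the remark that $P$ is parameter-independent. That independence is exactly what makes the diagonalization simultaneous. The case $N=0$ is trivial: all matrices are $1\times1$ with entry $1$, matching the empty product.
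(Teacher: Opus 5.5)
Your proposal is correct and matches the paper's proof. The paper also uses the parameter-independent change-of-basis matrix $B_N$ (your $P$) to get $C_N(t,s)B_N=B_ND_N(t,s)$, hence $C_N(t,s)=B_ND_N(t,s)B_N^{-1}$. It then reads off the characteristic polynomial, determinant, trace, and commutativity exactly as you do.
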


\begin{proof}
Let $B_N$ be the change-of-basis matrix defined by
\begin{equation}\label{eq:B-definition}
 Q_\lambda
 =\sum_{\alpha\in\OP(N)}
  (B_N)_{\lambda\alpha}p_\alpha.
\end{equation}
Both the $Q_\lambda$ and the $p_\alpha$ are bases of $\Gam_N$, so
$B_N$ is invertible. Let
\[
 D_N(t,s)=\diag\bigl(r_\alpha(t,s):\alpha\in\OP(N)\bigr).
\]
Equations \eqref{eq:C-definition}, \eqref{eq:T-palpha}, and
\eqref{eq:B-definition} imply
\[
 C_N(t,s)B_N=B_ND_N(t,s),
\]
and hence
\begin{equation}\label{eq:C-similarity}
 C_N(t,s)=B_ND_N(t,s)B_N^{-1}.
\end{equation}
The characteristic polynomial, determinant, and trace follow. Since
all diagonal matrices $D_N(t,s)$ commute, their conjugates by the same
matrix $B_N$ also commute.
\end{proof}

\begin{corollary}[Trace generating function]\label{cor:trace-generating}
As a formal power series in $z$,
\begin{equation}\label{eq:trace-generating}
 \sum_{N\geq0}\operatorname{tr}C_N(t,s)z^N
 =\prod_{\substack{n\geq1\\n\text{ odd}}}
  \frac{1}{1-r_n(t,s)z^n}.
\end{equation}
\end{corollary}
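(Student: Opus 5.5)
The plan is to start from the trace formula \eqref{eq:trace-general} of \cref{thm:spectral-data}, which gives $\operatorname{tr}C_N(t,s)=\sum_{\alpha\in\OP(N)}r_\alpha(t,s)$ for every $N\geq0$. For $N=0$ the only odd partition is the empty one, with $r_\emptyset=1$, matching the constant term $1$ on the right. The left side of \eqref{eq:trace-generating} therefore equals
\[
 \sum_{N\geq0}\sum_{\alpha\in\OP(N)}r_\alpha(t,s)z^{|\alpha|}
 =\sum_{\alpha\ \text{odd}}r_\alpha(t,s)z^{|\alpha|},
\]
where the sum runs over all partitions with odd parts.

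The next step is to encode an odd partition by its multiplicities, $\alpha=(1^{m_1}3^{m_3}5^{m_5}\cdots)$ with finitely many $m_n\neq0$. Because $r_\alpha$ is multiplicative over the parts by \eqref{eq:r-alpha}, we have
\[
 r_\alpha(t,s)z^{|\alpha|}=\prod_{n\ \text{odd}}\bigl(r_n(t,s)z^n\bigr)^{m_n}.
\]
Summing independently over each $m_n\geq0$ then gives $\prod_{n\ \text{odd}}\sum_{m\geq0}(r_n z^n)^m=\prod_{n\ \text{odd}}(1-r_n(t,s)z^n)^{-1}$. This is the standard Euler product argument, now carried out with weights.

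The only point that needs care is that the infinite product and the geometric series make sense as formal power series in $z$ with coefficients in $\mathbb Q(t,s)$. Each factor is $1+O(z^n)$ with $n\to\infty$, so the product converges $z$-adically. The coefficient of $z^N$ in the expanded product is a finite sum, since only factors with $n\leq N$ and exponents $m_n\leq N/n$ contribute. This justifies interchanging the product and the sum, and identifies the coefficient of $z^N$ exactly with $\sum_{\alpha\in\OP(N)}r_\alpha(t,s)$.

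I do not expect a real obstacle here. The substance is entirely in \cref{thm:spectral-data}, and what remains is routine formal bookkeeping.
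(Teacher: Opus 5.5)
Your proposal is correct and follows the paper's proof: you read off the coefficient of $z^N$ from the trace formula \eqref{eq:trace-general} and identify it with the multiplicity expansion of the Euler-type product. Your added remarks on the $N=0$ term and on $z$-adic convergence are routine details that the paper leaves implicit.
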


\begin{proof}
By \eqref{eq:trace-general}, the coefficient of $z^N$ on the left is
the sum of $\prod_{a\in\alpha}r_a(t,s)$ over odd partitions of $N$.
The product on the right independently records the multiplicity of
each odd part.
\end{proof}

\subsection{Weighted symmetry and spin-character formula}

\begin{theorem}[Weighted symmetry]\label{thm:weighted-symmetry}
For strict partitions $\lambda,\mu$ of the same size,
\begin{equation}\label{eq:weighted-symmetry}
 2^{\ell(\mu)}C_{\lambda\mu}(t,s)
 =2^{\ell(\lambda)}C_{\mu\lambda}(t,s).
\end{equation}
Equivalently, if
\[
 H_N=\diag\bigl(2^{\ell(\lambda)}:\lambda\in\SP(N)\bigr),
\]
then $C_N(t,s)H_N$ is symmetric. Equivalently, after adjoining
$\sqrt{2}$ if desired, $H_N^{-1/2}C_N(t,s)H_N^{1/2}$ is symmetric.
\end{theorem}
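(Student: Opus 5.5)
The plan is to express $C_{\lambda\mu}(t,s)$ as a matrix coefficient of $\Trel{t}{s}$ for the scalar product \eqref{eq:Q-inner-product}, and then use that $\Trel{t}{s}$ is self-adjoint for that product. Pairing \eqref{eq:C-definition} with $Q_\mu$ and using \eqref{eq:PQ-duality} gives
\[
 \langle \Trel{t}{s}Q_\lambda, Q_\mu\rangle
 =\sum_{\nu}C_{\lambda\nu}(t,s)\langle Q_\nu,Q_\mu\rangle
 =2^{\ell(\mu)}C_{\lambda\mu}(t,s).
\]
Here the scalar product is extended $\mathbb Q(t,s)$-bilinearly.

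The identity \eqref{eq:weighted-symmetry} is then equivalent to
\[
 \langle \Trel{t}{s}Q_\lambda,Q_\mu\rangle
 =\langle Q_\lambda,\Trel{t}{s}Q_\mu\rangle,
\]
together with the symmetry of the bilinear form. So it suffices to show that $\Trel{t}{s}$ is self-adjoint. By \eqref{eq:T-palpha}, $\Trel{t}{s}$ is diagonal in the basis $p_\alpha$. By \eqref{eq:Q-inner-product}, this basis is orthogonal. Hence
\[
 \langle \Trel{t}{s}p_\alpha,p_\beta\rangle
 =r_\alpha(t,s)\,2^{-\ell(\alpha)}z_\alpha\delta_{\alpha\beta}
 =\langle p_\alpha,\Trel{t}{s}p_\beta\rangle,
\]
and self-adjointness follows by bilinearity. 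As an alternative check, one can expand $Q_\lambda$ and $Q_\mu$ via \eqref{eq:spin-expansion}. This gives the explicit formula
\[
 2^{\ell(\mu)}C_{\lambda\mu}(t,s)
 =\sum_{\alpha\in\OP(N)}\frac{2^{\ell(\alpha)}}{z_\alpha}\chi^\lambda_\alpha\chi^\mu_\alpha\, r_\alpha(t,s),
\]
which is manifestly symmetric in $\lambda,\mu$ (this is presumably the spin-character formula to follow).

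For the matrix reformulations: the $(\lambda,\mu)$ entry of $C_N H_N$ is $C_{\lambda\mu}2^{\ell(\mu)}$, so its symmetry is exactly \eqref{eq:weighted-symmetry}. Conjugating, the $(\lambda,\mu)$ entry of $H_N^{-1/2}C_NH_N^{1/2}$ is $2^{(\ell(\mu)-\ell(\lambda))/2}C_{\lambda\mu}$. This is symmetric because \eqref{eq:weighted-symmetry}, after dividing by $2^{(\ell(\lambda)+\ell(\mu))/2}$, says exactly that.

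There is no real obstacle. The only care needed is to note that the pairing computation relies on the orthogonality $\langle Q_\nu,Q_\mu\rangle=2^{\ell(\mu)}\delta_{\nu\mu}$ from \eqref{eq:PQ-duality}, and that extending scalars preserves both this orthogonality and the diagonal action of $\Trel{t}{s}$.
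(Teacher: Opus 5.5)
Your proposal is correct and follows essentially the same argument as the paper: you pair \eqref{eq:C-definition} with $Q_\mu$ to get $\langle \Trel{t}{s}Q_\lambda,Q_\mu\rangle=2^{\ell(\mu)}C_{\lambda\mu}(t,s)$, and then use that $\Trel{t}{s}$ is self-adjoint because it is diagonal in the orthogonal basis $\{p_\alpha\}$. Your handling of the matrix reformulations is correct, and your explicit cross-check is exactly the paper's subsequent spin-character formula (\cref{thm:spin-character-formula}), multiplied through by $2^{\ell(\mu)}$.
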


\begin{proof}
The operator $\Trel{t}{s}$ is diagonal in the orthogonal basis
$\{p_\alpha\}$ for the scalar product
\eqref{eq:Q-inner-product}. Hence it is self-adjoint:
\[
 \langle \Trel{t}{s}f,g\rangle
 =\langle f,\Trel{t}{s}g\rangle.
\]
Using \eqref{eq:C-definition} and \eqref{eq:PQ-duality},
\[
 \langle \Trel{t}{s}Q_\lambda,Q_\mu\rangle
 =2^{\ell(\mu)}C_{\lambda\mu}(t,s),
\]
whereas
\[
 \langle Q_\lambda,\Trel{t}{s}Q_\mu\rangle
 =2^{\ell(\lambda)}C_{\mu\lambda}(t,s).
\]
This proves \eqref{eq:weighted-symmetry}; the matrix statements are
immediate reformulations.
\end{proof}

\begin{theorem}[Spin-character formula]\label{thm:spin-character-formula}
With the coefficients from \eqref{eq:spin-expansion},
\begin{equation}\label{eq:spin-character-transition}
 C_{\lambda\mu}(t,s)
 =2^{-\ell(\mu)}
  \sum_{\alpha\in\OP(N)}
   \frac{2^{\ell(\alpha)}}{z_\alpha}
   \chi_\alpha^\lambda\chi_\alpha^\mu
   r_\alpha(t,s).
\end{equation}
\end{theorem}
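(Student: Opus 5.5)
The plan is to extract $C_{\lambda\mu}(t,s)$ from \eqref{eq:C-definition} by pairing against the dual basis, then compute that pairing in the power-sum basis, where $\Trel{t}{s}$ is diagonal. By \eqref{eq:PQ-duality}, $\langle Q_\nu,Q_\mu\rangle=2^{\ell(\mu)}\delta_{\nu\mu}$, so pairing both sides of \eqref{eq:C-definition} with $Q_\mu$ gives
\[
 C_{\lambda\mu}(t,s)=2^{-\ell(\mu)}\langle \Trel{t}{s}Q_\lambda,Q_\mu\rangle .
\]
This is the same identity already used in the proof of \cref{thm:weighted-symmetry}.

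Next, expand $Q_\lambda$ by \eqref{eq:spin-expansion} and apply \eqref{eq:T-palpha} termwise:
\[
 \Trel{t}{s}Q_\lambda=\sum_{\alpha\in\OP(N)}\frac{2^{\ell(\alpha)}}{z_\alpha}\chi^\lambda_\alpha\, r_\alpha(t,s)\,p_\alpha .
\]
Expand $Q_\mu$ the same way, with summation index $\beta$. Then use the orthogonality \eqref{eq:Q-inner-product}, $\langle p_\alpha,p_\beta\rangle=2^{-\ell(\alpha)}z_\alpha\delta_{\alpha\beta}$. Only the diagonal terms $\alpha=\beta$ survive, and each contributes
\[
 \frac{2^{\ell(\alpha)}}{z_\alpha}\cdot\frac{2^{\ell(\alpha)}}{z_\alpha}\cdot 2^{-\ell(\alpha)}z_\alpha\;\chi^\lambda_\alpha\chi^\mu_\alpha r_\alpha(t,s)=\frac{2^{\ell(\alpha)}}{z_\alpha}\chi^\lambda_\alpha\chi^\mu_\alpha r_\alpha(t,s).
\]
Summing over $\alpha\in\OP(N)$ and multiplying by $2^{-\ell(\mu)}$ gives \eqref{eq:spin-character-transition}.

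No step is a real obstacle. The one point needing care is bookkeeping of the powers of $2$ and the factors $z_\alpha$, which must cancel to leave exactly $2^{\ell(\alpha)}/z_\alpha$. One should also note that the scalar product is extended $\mathbb Q(t,s)$-bilinearly, not sesquilinearly, so the scalar $r_\alpha(t,s)$ passes through the pairing unchanged. As a consistency check, the result is compatible with \cref{thm:weighted-symmetry}, since the sum is symmetric in $\lambda$ and $\mu$. Setting $t=s$ gives $r_\alpha=1$, and then \eqref{eq:C-inverse} recovers the spin-character orthogonality relation $\sum_\alpha 2^{\ell(\alpha)}z_\alpha^{-1}\chi^\lambda_\alpha\chi^\mu_\alpha=2^{\ell(\mu)}\delta_{\lambda\mu}$.
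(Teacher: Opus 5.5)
Your proposal is correct and follows the paper's proof essentially verbatim. Both arguments extract $C_{\lambda\mu}(t,s)=2^{-\ell(\mu)}\langle \Trel{t}{s}Q_\lambda,Q_\mu\rangle$ via \eqref{eq:PQ-duality}, expand in power sums using \eqref{eq:spin-expansion} and \eqref{eq:T-palpha}, and apply the orthogonality \eqref{eq:Q-inner-product} with the same bookkeeping $\frac{2^{\ell(\alpha)}}{z_\alpha}\cdot\frac{2^{\ell(\alpha)}}{z_\alpha}\cdot 2^{-\ell(\alpha)}z_\alpha=\frac{2^{\ell(\alpha)}}{z_\alpha}$; your extra remarks on bilinear scalar extension and the $t=s$ orthogonality check are also correct.
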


\begin{proof}
Since $P_\mu$ is dual to $Q_\mu$,
\[
 C_{\lambda\mu}(t,s)
 =\langle \Trel{t}{s}Q_\lambda,P_\mu\rangle
 =2^{-\ell(\mu)}
  \langle \Trel{t}{s}Q_\lambda,Q_\mu\rangle.
\]
Insert the expansions \eqref{eq:spin-expansion}, use
\eqref{eq:T-palpha}, and then apply
\eqref{eq:Q-inner-product}. Only equal odd partitions survive, and
the scalar factor is
\[
 \frac{2^{\ell(\alpha)}}{z_\alpha}
 \frac{2^{\ell(\alpha)}}{z_\alpha}
 2^{-\ell(\alpha)}z_\alpha
 =\frac{2^{\ell(\alpha)}}{z_\alpha}.
\]
This gives \eqref{eq:spin-character-transition}.
\end{proof}

\subsection{A transition Cauchy identity}

\begin{theorem}\label{thm:transition-Cauchy}
For two alphabets $X$ and $Y$,
\begin{equation}\label{eq:transition-Cauchy}
 \sum_{\lambda\in\SP}
 \sum_{\mu\in\SP(|\lambda|)}
 C_{\lambda\mu}(t,s)Q_\mu(X)P_\lambda(Y)
 =\exp\!\left(
   2\sum_{\substack{n\geq1\\n\text{ odd}}}
    \frac{1-t^n}{1-s^n}
    \frac{p_n(X)p_n(Y)}{n}
  \right).
\end{equation}
\end{theorem}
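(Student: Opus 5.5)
The plan is to reduce \eqref{eq:transition-Cauchy} to the classical Schur $Q$ Cauchy identity by applying the operator $\Trel{t}{s}$ in the $X$-variables. First I will use the definition \eqref{eq:C-definition} to collapse the inner sum:
\[
 \sum_{\mu\in\SP(|\lambda|)}C_{\lambda\mu}(t,s)Q_\mu(X)=\bigl(\Trel{t}{s}Q_\lambda\bigr)(X).
\]
With this, the left side becomes $\Trel{t}{s}^{(X)}\sum_{\lambda}Q_\lambda(X)P_\lambda(Y)$. Here $\Trel{t}{s}^{(X)}$ acts on the $X$ factor only and is applied degree by degree, so it is well defined on the completed tensor product.

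Next I will invoke the classical Cauchy identity
\[
 \sum_{\lambda\in\SP}Q_\lambda(X)P_\lambda(Y)
 =\exp\Bigl(2\sum_{n\text{ odd}}\frac{p_n(X)p_n(Y)}{n}\Bigr).
\]
If one prefers not to cite \cite[III.8]{Macdonald}, this can be derived from the paper's own data. The relations \eqref{eq:PQ-duality} say that $\{Q_\lambda\}$ and $\{P_\lambda\}$ are dual bases for \eqref{eq:Q-inner-product}. By \eqref{eq:Q-inner-product}, $\{p_\alpha\}$ and $\{2^{\ell(\alpha)}z_\alpha^{-1}p_\alpha\}$ are also dual bases. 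The canonical element $\sum_\lambda Q_\lambda(X)P_\lambda(Y)$ is independent of the choice of dual bases, so it equals
\[
 \sum_{\alpha\in\OP}2^{\ell(\alpha)}z_\alpha^{-1}p_\alpha(X)p_\alpha(Y).
\]
Expanding the exponential and grouping by multiplicities $m_r$ gives exactly this sum.

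Finally I will apply $\Trel{t}{s}^{(X)}$ termwise to the power-sum form. By \eqref{eq:T-palpha}, each $p_\alpha(X)$ is multiplied by $r_\alpha(t,s)=\prod_{a\in\alpha}r_a(t,s)$. This is the same as replacing $p_n(X)$ by $r_n(t,s)p_n(X)$ inside the exponential, since $\Trel{t}{s}$ is an algebra homomorphism and the exponential expansion is multiplicative in the odd parts. The result is the right side of \eqref{eq:transition-Cauchy}.

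The only real obstacle is bookkeeping. One point is that in the collapsing step $\Trel{t}{s}Q_\lambda$ must be expanded in the $Q_\mu$, with the index $\lambda$ kept attached to $P_\lambda(Y)$. The other is that the operator must be applied on the $X$ side, which is consistent with \eqref{eq:C-definition}. The ordering of $\lambda,\mu$ relative to $X,Y$ is harmless anyway by \cref{thm:weighted-symmetry}, because $2^{-\ell(\lambda)}C_{\lambda\mu}=2^{-\ell(\mu)}C_{\mu\lambda}$.
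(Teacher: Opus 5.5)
Your proof is correct and is essentially the paper's argument: you collapse the inner sum via \eqref{eq:C-definition}, then apply $\Trel{t}{s}$ in the $X$ variables to the classical Cauchy identity \eqref{eq:classical-Q-Cauchy}, using \eqref{eq:T-on-p} on the right side. The only addition is your dual-basis derivation of the classical identity from \eqref{eq:Q-inner-product} and \eqref{eq:PQ-duality}; the paper simply quotes that identity, and your derivation is a valid way to make the argument self-contained.
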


\begin{proof}
The classical Schur $Q$ Cauchy identity is
\begin{equation}\label{eq:classical-Q-Cauchy}
 \sum_{\lambda\in\SP}Q_\lambda(X)P_\lambda(Y)
 =\exp\!\left(
   2\sum_{\substack{n\geq1\\n\text{ odd}}}
    \frac{p_n(X)p_n(Y)}{n}
  \right).
\end{equation}
Apply $\Trel{t}{s}$ to the $X$ variables. The left side becomes the
left side of \eqref{eq:transition-Cauchy} by
\eqref{eq:C-definition}; the right side follows from
\eqref{eq:T-on-p}.
\end{proof}

\section{Cyclotomic transition coefficients}
\label{sec:cyclotomic}

Fix a positive integer $M$ and define the finite plethystic alphabet
\begin{equation}\label{eq:AM-definition}
 A_M(t)=1+t+\cdots+t^{M-1}.
\end{equation}
We call a polynomial $f(t)$ \emph{palindromic of darga $d$} if
$f(t)=t^df(t^{-1})$; this convention allows zero coefficients at both
ends of the ambient degree interval $[0,d]$. For every $n\geq1$,
\begin{equation}\label{eq:power-AM}
 p_n[A_M(t)]
 =1+t^n+\cdots+t^{(M-1)n}
 =\frac{1-t^{Mn}}{1-t^n}.
\end{equation}

\subsection{Cyclotomic transition coefficients}

\begin{definition}\label{def:cyclotomic-transition}
Set
\begin{equation}\label{eq:CM-definition}
 C_{\lambda\mu}^{[M]}(t)
 :=C_{\lambda\mu}(t^M,t),
 \qquad
 C_N^{[M]}(t):=C_N(t^M,t).
\end{equation}
\end{definition}

By \eqref{eq:T-on-p} and \eqref{eq:power-AM},
\begin{equation}\label{eq:TM-on-p}
 \Trel{t^M}{t}(p_n)
 =\bigl(1+t^n+\cdots+t^{(M-1)n}\bigr)p_n.
\end{equation}
Therefore, for every $f\in\Gam$,
\begin{equation}\label{eq:TM-plethystic}
 \Trel{t^M}{t}f(X)=f[A_M(t)X].
\end{equation}
The transition identity becomes
\begin{equation}\label{eq:CM-transition}
 Q_\lambda[X+tX+\cdots+t^{M-1}X]
 =\sum_{\mu\in\SP(|\lambda|)}
  C_{\lambda\mu}^{[M]}(t)Q_\mu(X).
\end{equation}

\begin{theorem}[Positivity]\label{thm:CM-positivity}
For all strict partitions $\lambda$ and $\mu$ of the same size,
\begin{equation}\label{eq:CM-positive}
 C_{\lambda\mu}^{[M]}(t)\in\mathbb{N}[t].
\end{equation}
\end{theorem}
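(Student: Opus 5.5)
The plan is to expand $Q_\lambda$ over a sum of alphabets using the coproduct of $\Gam$, and then control the $t$-dependence by homogeneity. The starting point is \eqref{eq:CM-transition}. The alphabet $X+tX+\cdots+t^{M-1}X$ is a sum of $M$ alphabets $t^iX$, $0\le i\le M-1$. Since the odd power sums are primitive, $p_n[X+Y]=p_n[X]+p_n[Y]$, and this gives the standard coproduct formula for Schur $Q$-functions:
\[
 Q_\lambda[X+Y]=\sum_{\nu\subseteq\lambda} Q_{\lambda/\nu}(X)\,Q_\nu(Y),
\]
where $Q_{\lambda/\nu}=\sum_\kappa f^{\lambda}_{\nu\kappa}Q_\kappa$ and the $f^\lambda_{\nu\kappa}=2^{\ell(\nu)+\ell(\kappa)-\ell(\lambda)}g^\lambda_{\nu\kappa}$ are nonnegative integers. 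Nonnegativity holds because the $P$-structure constants $g$ are counted by Stembridge's shifted Littlewood--Richardson rule, and $Q_\nu Q_\kappa=\sum_\lambda f^\lambda_{\nu\kappa}Q_\lambda$. I will cite \cite{Stembridge} and \cite[III.8]{Macdonald} for this.

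Iterating the coproduct over the $M$ summands gives
\[
 Q_\lambda\Bigl[\sum_{i=0}^{M-1}t^iX\Bigr]=\sum_{\emptyset=\nu^{(0)}\subseteq\nu^{(1)}\subseteq\cdots\subseteq\nu^{(M)}=\lambda}\ \prod_{i=0}^{M-1}Q_{\nu^{(i+1)}/\nu^{(i)}}[t^iX].
\]
Each skew function $Q_{\nu^{(i+1)}/\nu^{(i)}}$ is homogeneous of degree $|\nu^{(i+1)}|-|\nu^{(i)}|$. Hence
\[
 Q_{\nu^{(i+1)}/\nu^{(i)}}[t^iX]=t^{i(|\nu^{(i+1)}|-|\nu^{(i)}|)}\,Q_{\nu^{(i+1)}/\nu^{(i)}}(X).
\]
The plethysm $p_n[t^iX]=t^{in}p_n$ is legitimate here because $t$ is treated as a monomial variable of rank one. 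Each skew function $Q_{\nu^{(i+1)}/\nu^{(i)}}(X)$ expands in the $Q_\kappa$ with coefficients in $\mathbb N$. Products of $Q$-functions also expand in the $Q_\mu$ with coefficients $f^\mu_{\kappa\kappa'}\in\mathbb N$. Multiplying out, the coefficient of $Q_\mu(X)$ is therefore a sum of monomials $t^{e}$ with nonnegative integer multiplicities, which gives \eqref{eq:CM-positive}.

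The main point needing care is the integrality and nonnegativity of the structure constants in the $Q$-normalization, as opposed to the $P$-normalization. For the products, I use $f^\lambda_{\nu\kappa}=2^{\ell(\nu)+\ell(\kappa)-\ell(\lambda)}g^\lambda_{\nu\kappa}$ together with $\ell(\lambda)\le\ell(\nu)+\ell(\kappa)$ whenever $g^\lambda_{\nu\kappa}\ne0$. For the skew expansion, duality \eqref{eq:PQ-duality} gives
\[
 \langle Q_{\lambda/\nu},P_\kappa\rangle=\langle Q_\lambda,P_\nu P_\kappa\rangle=g^\lambda_{\nu\kappa}.
\]
So $Q_{\lambda/\nu}=\sum_\kappa g^\lambda_{\nu\kappa}Q_\kappa$ in the $Q$-basis, again with nonnegative integer coefficients. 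With this normalization in hand the argument closes. A cleaner alternative avoids products entirely by writing directly
\[
 Q_\lambda[X+Y]=\sum_{\nu,\kappa} g^\lambda_{\nu\kappa}\,Q_\nu(Y)\,Q_\kappa(X),
\]
but for $M>2$ the products reappear, so the product step is needed anyway.
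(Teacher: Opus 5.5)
Your proposal is correct and is essentially the paper's proof: you iterate the coproduct over the alphabets $t^iX$, pull out the monomial $t^{i(|\nu^{(i+1)}|-|\nu^{(i)}|)}$ by homogeneity, and conclude from Stembridge's shifted Littlewood--Richardson positivity for skew $Q$-functions and for products, and your normalization check ($Q_{\lambda/\nu}=\sum_\kappa g^\lambda_{\nu\kappa}Q_\kappa$ and $f^\lambda_{\nu\kappa}=2^{\ell(\nu)+\ell(\kappa)-\ell(\lambda)}g^\lambda_{\nu\kappa}\in\mathbb N$) makes explicit the integrality that the paper leaves implicit. Your closing aside is slightly off, since products $Q_\nu(X)Q_\kappa(X)$ in the single alphabet $X$ already appear for $M=2$, but this does not affect the argument.
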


\begin{proof}
For alphabets $U$ and $V$, the Schur $Q$ coproduct identity is
\begin{equation}\label{eq:skew-coproduct}
 Q_\lambda[U+V]
 =\sum_{\nu\subseteq\lambda}
   Q_\nu[U]Q_{\lambda/\nu}[V].
\end{equation}
Iterating \eqref{eq:skew-coproduct} with
$U_j=t^jX$ gives
\begin{equation}\label{eq:chain-expansion}
 \begin{aligned}
 &Q_\lambda[X+tX+\cdots+t^{M-1}X]\\
 &\quad=
 \sum_{\varnothing=\lambda^{(0)}\subseteq
       \lambda^{(1)}\subseteq\cdots\subseteq
       \lambda^{(M)}=\lambda}
 t^{\wt(\lambda^\bullet)}
 \prod_{j=0}^{M-1}
 Q_{\lambda^{(j+1)}/\lambda^{(j)}}(X),
 \end{aligned}
\end{equation}
where
\begin{equation}\label{eq:chain-weight}
 \wt(\lambda^\bullet)
 =\sum_{j=0}^{M-1}
  j\bigl(|\lambda^{(j+1)}|-|\lambda^{(j)}|\bigr).
\end{equation}
The shifted Littlewood--Richardson rule implies that every skew Schur
$Q$-function is Schur $Q$-positive and that products of Schur
$Q$-functions are Schur $Q$-positive; see
\cite{Stembridge}. Hence each product in
\eqref{eq:chain-expansion} is a nonnegative integral linear
combination of $Q_\mu(X)$. The weight
$t^{\wt(\lambda^\bullet)}$ is a monomial with nonnegative exponent.
Comparing with \eqref{eq:CM-transition} proves
\eqref{eq:CM-positive}.
\end{proof}

\begin{theorem}[Reciprocity]\label{thm:CM-reciprocity}
If $\lambda,\mu\in\SP(N)$, then
\begin{equation}\label{eq:CM-reciprocity}
 C_{\lambda\mu}^{[M]}(t)
 =t^{(M-1)N}C_{\lambda\mu}^{[M]}(t^{-1}).
\end{equation}
Consequently, $C_{\lambda\mu}^{[M]}(t)$ is palindromic with darga
$(M-1)N$. Moreover,
\begin{equation}\label{eq:CM-end-coefficients}
 C_{\lambda\mu}^{[M]}(0)=\delta_{\lambda\mu},
 \qquad
 [t^{(M-1)N}]C_{\lambda\mu}^{[M]}(t)
 =\delta_{\lambda\mu}.
\end{equation}
\end{theorem}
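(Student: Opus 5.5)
The plan is to prove the reciprocity identity through the spin-character formula of \cref{thm:spin-character-formula}. By \eqref{eq:spin-character-transition} with $(t,s)$ replaced by $(t^M,t)$, and using \eqref{eq:power-AM}, we have
\[
 C_{\lambda\mu}^{[M]}(t)
 =2^{-\ell(\mu)}\sum_{\alpha\in\OP(N)}
   \frac{2^{\ell(\alpha)}}{z_\alpha}\chi_\alpha^\lambda\chi_\alpha^\mu
   \prod_{a\in\alpha}\bigl(1+t^a+\cdots+t^{(M-1)a}\bigr).
\]
The coefficients $2^{\ell(\alpha)}z_\alpha^{-1}\chi^\lambda_\alpha\chi^\mu_\alpha$ do not depend on $t$. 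It therefore suffices to check that each factor $p_a[A_M(t)]$ is palindromic of darga $(M-1)a$. This holds because reversing $j\mapsto M-1-j$ gives
\[
 t^{(M-1)a}\sum_{j=0}^{M-1}t^{-ja}=\sum_{j=0}^{M-1}t^{ja}.
\]
Multiplying over the parts $a\in\alpha$ gives darga $(M-1)\sum_a a=(M-1)N$, which is the same for every $\alpha\in\OP(N)$. A linear combination of palindromic polynomials of a common darga is palindromic of that darga, so \eqref{eq:CM-reciprocity} follows. Equivalently, this is the plethystic statement that $A_M(t)=t^{M-1}A_M(t^{-1})$ as alphabets, together with homogeneity of degree $N$ of $Q_\lambda$ in the alphabet.

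For the end coefficients, I would set $t=0$ in \eqref{eq:CM-transition}. Since $C^{[M]}_{\lambda\mu}(t)\in\mathbb N[t]$ by \cref{thm:CM-positivity} and is polynomial, evaluation at $t=0$ is legitimate. The left side becomes $Q_\lambda[X]$, because $p_n[A_M(0)X]=p_n[X]$. The $Q_\mu$ form a basis, so $C^{[M]}_{\lambda\mu}(0)=\delta_{\lambda\mu}$. Alternatively, the spin-character formula at $t=0$ gives $C_{\lambda\mu}(0,0)$, which is the identity matrix by \cref{prop:cocycle}, since $T_{0,0}=\mathrm{id}$. The top coefficient $[t^{(M-1)N}]$ then equals the constant term by the palindromic identity just proved, so it is also $\delta_{\lambda\mu}$.

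I expect no serious obstacle. The only care needed is bookkeeping: the darga $(M-1)|\alpha|$ must be uniform across all $\alpha\in\OP(N)$, which holds because every $\alpha$ has size $N$. One should also note that palindromicity is meant in the paper's convention, which allows vanishing end coefficients. Here the end coefficients in fact vanish whenever $\lambda\neq\mu$.
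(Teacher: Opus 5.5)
Your proof is correct and is essentially the paper's argument. The paper uses the alphabet identity $A_M(t)=t^{M-1}A_M(t^{-1})$ together with degree-$N$ homogeneity of $Q_\lambda$ and compares $Q$-coefficients; this is exactly your termwise palindromicity of $\prod_{a\in\alpha}[M]_{t^a}$, written plethystically instead of in power-sum coordinates. The end coefficients are handled the same way, by setting $t=0$ and then invoking reciprocity. Your appeal to \cref{thm:CM-positivity} for polynomiality is harmless but unnecessary, since the spin-character formula already exhibits $C^{[M]}_{\lambda\mu}(t)$ as a polynomial.
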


\begin{proof}
The alphabet identity
\[
 A_M(t)=t^{M-1}A_M(t^{-1})
\]
and homogeneity of $Q_\lambda$ give
\[
 Q_\lambda[A_M(t)X]
 =t^{(M-1)N}Q_\lambda[A_M(t^{-1})X].
\]
Expand both sides in the $Q$-basis using
\eqref{eq:CM-transition} and compare coefficients. This proves
\eqref{eq:CM-reciprocity}.

At $t=0$, the alphabet $A_M(0)$ consists of the single letter $1$, so
$Q_\lambda[A_M(0)X]=Q_\lambda[X]$. Thus the constant term is
$\delta_{\lambda\mu}$. The highest coefficient follows from
reciprocity.
\end{proof}

\begin{corollary}[Weighted symmetry]\label{cor:CM-weighted}
For $\lambda,\mu\in\SP(N)$,
\begin{equation}\label{eq:CM-weighted}
 2^{\ell(\mu)}C_{\lambda\mu}^{[M]}(t)
 =2^{\ell(\lambda)}C_{\mu\lambda}^{[M]}(t).
\end{equation}
\end{corollary}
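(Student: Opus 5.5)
This is a direct specialization of \cref{thm:weighted-symmetry}. That theorem holds for independent indeterminates $t$ and $s$, with $C_{\lambda\mu}(t,s)\in\mathbb Q(t,s)$. The plan is to substitute $t\mapsto t^M$ and $s\mapsto t$ in the identity \eqref{eq:weighted-symmetry}, which by \eqref{eq:CM-definition} turns both sides into the cyclotomic coefficients.

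The only point needing care is that the substitution is legitimate: the general coefficients are a priori rational functions, and $s=t$ could hit a denominator. I would sidestep this by not specializing the coefficients at all. Instead I would rerun the self-adjointness argument directly for the operator $\Trel{t^M}{t}$ over $\mathbb Q(t)$. By \eqref{eq:TM-on-p}, this operator acts on $p_n$ by the scalar $1+t^n+\cdots+t^{(M-1)n}$, so it is diagonal in the orthogonal basis $\{p_\alpha\}$ for \eqref{eq:Q-inner-product}, extended $\mathbb Q(t)$-bilinearly. Hence
\[
\langle \Trel{t^M}{t}f,g\rangle=\langle f,\Trel{t^M}{t}g\rangle .
\]
Next, pair \eqref{eq:CM-transition} (equivalently \eqref{eq:C-definition} at $(t^M,t)$) with $Q_\mu$ and use \eqref{eq:PQ-duality}. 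This gives
\[
\langle \Trel{t^M}{t}Q_\lambda,Q_\mu\rangle=2^{\ell(\mu)}C^{[M]}_{\lambda\mu}(t).
\]
By the symmetric computation,
\[
\langle Q_\lambda,\Trel{t^M}{t}Q_\mu\rangle=2^{\ell(\lambda)}C^{[M]}_{\mu\lambda}(t).
\]
Equating the two by self-adjointness yields \eqref{eq:CM-weighted}.

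Alternatively, one can note that $r_n(t^M,t)=(1-t^{Mn})/(1-t^n)$ is a polynomial. Then \eqref{eq:spin-character-transition} shows that $C_{\lambda\mu}(t,s)$ is a linear combination of the $r_\alpha(t,s)$ with constant coefficients, so the specialization is well defined and the identity survives it. There is no real obstacle; the only step deserving a sentence is this justification of specializing $s=t$.
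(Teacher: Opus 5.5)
Your proposal is correct and is essentially the paper's argument: the paper simply specializes \cref{thm:weighted-symmetry} at $(t,s)=(t^M,t)$, and your primary route reruns that theorem's self-adjointness proof directly for $\Trel{t^M}{t}$ over $\mathbb{Q}(t)$. Your justification that the specialization is legitimate is a point the paper leaves implicit: by \eqref{eq:spin-character-transition}, or by \eqref{eq:C-similarity} with $B_N$ constant, the only denominators are of the form $1-s^a$.
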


\begin{proof}
Specialize \cref{thm:weighted-symmetry} at $(t,s)=(t^M,t)$.
\end{proof}

\subsection{Factorization, determinant, and roots of unity}

Let $\mathscr{T}_M(t)$ denote the plethystic operator
\[
 \mathscr{T}_M(t)f(X)=f[A_M(t)X].
\]

\begin{proposition}[Multiplicative factorization]\label{prop:CM-factorization}
For positive integers $M$ and $L$,
\begin{equation}\label{eq:alphabet-factorization}
 A_{ML}(t)=A_M(t)A_L(t^M)
\end{equation}
plethystically, and
\begin{equation}\label{eq:operator-factorization}
 \mathscr{T}_{ML}(t)
 =\mathscr{T}_L(t^M)\mathscr{T}_M(t).
\end{equation}
Consequently,
\begin{equation}\label{eq:matrix-factorization}
 C_N^{[ML]}(t)
 =C_N^{[M]}(t)C_N^{[L]}(t^M)
 =C_N^{[L]}(t^M)C_N^{[M]}(t).
\end{equation}
\end{proposition}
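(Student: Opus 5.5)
The plan is to verify everything on the odd power sums $p_n$, where all the operators involved act diagonally, and then read off the matrix statement from the cocycle identity (\cref{prop:cocycle}) or directly from \eqref{eq:C-definition}.

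\textbf{Alphabet identity.} Plethystically, \eqref{eq:alphabet-factorization} means that $p_n$ takes the same value on both sides for every $n\geq1$. Since $p_n[AB]=p_n[A]p_n[B]$ for products of alphabets, it suffices to check the product of geometric sums given by \eqref{eq:power-AM}:
\[
 \frac{1-t^{Mn}}{1-t^n}\cdot\frac{1-t^{MLn}}{1-t^{Mn}}
 =\frac{1-t^{MLn}}{1-t^n}.
\]
The left factors are $p_n[A_M(t)]$ and $p_n[A_L(t^M)]$, and the right side is $p_n[A_{ML}(t)]$. Equivalently, one can write each exponent $k\in[0,ML)$ uniquely as $k=i+Mj$ with $0\le i<M$ and $0\le j<L$.

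\textbf{Operator identity.} Each operator $\mathscr{T}_M(t)$ is the algebra endomorphism multiplying $p_n$ by $p_n[A_M(t)]$. By \eqref{eq:TM-on-p} this is exactly $\Trel{t^M}{t}$. The composition $\mathscr{T}_L(t^M)\mathscr{T}_M(t)$ is therefore diagonal with eigenvalue $p_n[A_M(t)]\,p_n[A_L(t^M)]$ on $p_n$. By the alphabet identity this equals $p_n[A_{ML}(t)]$, which proves \eqref{eq:operator-factorization}. In the notation of \cref{sec:transition}, this is the cocycle relation $\Trel{t^M}{t^{ML}}$-free form
\[
 \Trel{t^{ML}}{t}=\Trel{t^M}{t}\,\Trel{t^{ML}}{t^M}
 =\Trel{t^{ML}}{t^M}\,\Trel{t^M}{t}.
\]
The two orders agree because all these operators are diagonal on the $p_\alpha$.

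\textbf{Matrix identity.} Apply \cref{prop:cocycle} with parameters $(t^{ML},t^M,t)$ in the roles of $(t,s,r)$. This gives
\[
 C_N(t^{ML},t)=C_N(t^{ML},t^M)\,C_N(t^M,t).
\]
Here $C_N(t^{ML},t^M)=C_N^{[L]}(t^M)$, since $C_N^{[L]}(u)=C_N(u^L,u)$ with $u=t^M$. Also $C_N(t^M,t)=C_N^{[M]}(t)$. So this yields the product $C_N^{[L]}(t^M)C_N^{[M]}(t)$.

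The other order follows because all the matrices $C_N(\cdot,\cdot)$ commute, by \cref{thm:spectral-data}.

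The only subtle point is the order convention in \cref{prop:cocycle}, namely $C_N(t,r)=C_N(t,s)C_N(s,r)$, which corresponds to composing $\Trel{s}{r}\Trel{t}{s}$. Since commutativity removes any dependence on order, this causes no real obstacle.

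One remaining detail is that \cref{prop:cocycle} was stated for independent parameters. Specializing $s,r$ to powers of $t$ is legitimate because every entry is a rational function, and in fact a polynomial for the cyclotomic cases, that remains well defined under these substitutions. None of the denominators $1-t^{Mn}$ vanish.
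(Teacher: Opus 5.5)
Your proposal is correct and follows essentially the paper's argument. Both verify the alphabet and operator identities on odd power sums, and both obtain the second matrix order from simultaneous diagonalizability. The only difference is cosmetic: you read off the matrix identity from the cocycle identity at $(t^{ML},t^M,t)$, which produces $C_N^{[L]}(t^M)C_N^{[M]}(t)$ first and needs your specialization remark, whereas the paper applies the operator factorization directly to $Q_\lambda$ and gets $C_N^{[M]}(t)C_N^{[L]}(t^M)$ first.
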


\begin{proof}
The ordinary polynomial identity
\[
 (1+t+\cdots+t^{M-1})
 (1+t^M+\cdots+t^{(L-1)M})
 =1+t+\cdots+t^{ML-1}
\]
is precisely \eqref{eq:alphabet-factorization}. On power sums,
\[
 [M]_{t^n}[L]_{t^{Mn}}=[ML]_{t^n},
\]
so \eqref{eq:operator-factorization} follows. Applying both sides to
$Q_\lambda$ and using the row-by-row convention in
\eqref{eq:C-definition} gives the first matrix product in
\eqref{eq:matrix-factorization}. The second equality follows because
all transition matrices are simultaneously diagonalizable by
\cref{thm:spectral-data}.
\end{proof}

\begin{theorem}[Cyclotomic determinant]\label{thm:CM-determinant}
For $N\geq0$,
\begin{equation}\label{eq:CM-determinant}
 \det C_N^{[M]}(t)
 =\prod_{\alpha\in\OP(N)}
   \prod_{a\in\alpha}[M]_{t^a},
\end{equation}
where
\[
 [M]_x=1+x+\cdots+x^{M-1}.
\]
Equivalently,
\begin{equation}\label{eq:CM-cyclotomic-factorization}
 \det C_N^{[M]}(t)
 =\prod_{\alpha\in\OP(N)}
   \prod_{a\in\alpha}
   \prod_{\substack{d\mid M\\d>1}}
    \Phi_d(t^a),
\end{equation}
where $\Phi_d$ is the $d$th cyclotomic polynomial.
\end{theorem}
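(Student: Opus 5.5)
The plan is to specialize the general determinant formula \eqref{eq:det-general} of \cref{thm:spectral-data} at $(t,s)=(t^M,t)$, and then factor each eigenvalue into cyclotomic polynomials.

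First, by \cref{def:cyclotomic-transition}, $C_N^{[M]}(t)=C_N(t^M,t)$. Combining \eqref{eq:det-general} with \eqref{eq:r-alpha} gives
\[
 \det C_N^{[M]}(t)=\prod_{\alpha\in\OP(N)}\prod_{a\in\alpha}\frac{1-t^{Ma}}{1-t^a}.
\]
By \eqref{eq:power-AM}, each factor equals $1+t^a+\cdots+t^{(M-1)a}=[M]_{t^a}$. This proves \eqref{eq:CM-determinant}.

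One small point needs checking. \cref{thm:spectral-data} was proved for independent indeterminates $t,s$, so we must make sure the specialization $t\mapsto t^M$, $s\mapsto t$ is legitimate. The matrix $C_N(t,s)$ has entries in $\mathbb Q(t,s)$, and by \eqref{eq:C-similarity} they lie in the subring generated by the $r_\alpha$. Alternatively, one can simply rerun the similarity argument \eqref{eq:C-similarity} directly with the operator $\Trel{t^M}{t}$, which is diagonal on $p_\alpha$ with eigenvalue $\prod_{a\in\alpha}[M]_{t^a}$ by \eqref{eq:TM-on-p}. I will take this direct route, since it avoids any worry about specializing denominators $1-s^a$. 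Since $B_N$ has rational entries independent of $t$, the determinant identity holds over $\mathbb Q(t)$. Because both sides are polynomials, it then holds as a polynomial identity.

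For \eqref{eq:CM-cyclotomic-factorization}, I will use the standard identity $x^M-1=\prod_{d\mid M}\Phi_d(x)$ together with $\Phi_1(x)=x-1$. These give
\[
 [M]_x=\frac{x^M-1}{x-1}=\prod_{d\mid M,\,d>1}\Phi_d(x).
\]
Substituting $x=t^a$ for each part $a$ then yields the second formula.

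There is no real obstacle in this proof. The only care needed is the justification of the specialization, and the direct diagonalization argument handles it.
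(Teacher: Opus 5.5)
Your proposal is correct and follows the paper's proof: you specialize \eqref{eq:det-general} at $(t,s)=(t^M,t)$, rewrite each factor as $[M]_{t^a}$, and use $[M]_x=\prod_{d\mid M,\,d>1}\Phi_d(x)$. Your extra step of rerunning the similarity argument directly with $\Trel{t^M}{t}$ is a harmless and slightly more careful justification of the specialization, which the paper leaves implicit.
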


\begin{proof}
Specialize \eqref{eq:det-general} at $(t,s)=(t^M,t)$ and use
\[
 \frac{1-t^{Ma}}{1-t^a}=[M]_{t^a}.
\]
The factorization in \eqref{eq:CM-cyclotomic-factorization} follows
from
\[
 [M]_x=\frac{1-x^M}{1-x}
 =\prod_{\substack{d\mid M\\d>1}}\Phi_d(x).
\]
\end{proof}

\begin{theorem}[Root-of-unity rank]\label{thm:root-rank}
Let $\zeta\in\mathbb{C}$ be a root of unity. Then
\begin{equation}\label{eq:root-rank}
 \operatorname{rank}C_N^{[M]}(\zeta)
 =\#\left\{
  \alpha\in\OP(N):
  [M]_{\zeta^a}\neq0
  \text{ for every part }a\text{ of }\alpha
 \right\}.
\end{equation}
In particular, for $N>0$,
\begin{equation}\label{eq:t-minus-one}
 C_N^{[M]}(-1)
 =\begin{cases}
   I,&M\text{ odd},\\
   0,&M\text{ even}.
  \end{cases}
\end{equation}
\end{theorem}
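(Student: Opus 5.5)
The plan is to use the similarity \eqref{eq:C-similarity} from \cref{thm:spectral-data}, but specialized \emph{before} evaluating at $\zeta$. The point needing care is that $C_N(t,s)$ was defined over $\mathbb Q(t,s)$. Specializing to $(t^M,t)$ gives $C_N^{[M]}(t)=B_N D_N(t^M,t) B_N^{-1}$. Here $B_N$ is the $Q$-to-$p$ change-of-basis matrix \eqref{eq:B-definition}, which has rational entries and does not depend on $t$. The diagonal matrix $D_N(t^M,t)$ has entries $r_\alpha(t^M,t)=\prod_{a\in\alpha}[M]_{t^a}$, and these are polynomials in $t$ by \eqref{eq:power-AM}. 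Thus $C_N^{[M]}(t)$ is a matrix of polynomials (consistent with \cref{thm:CM-positivity}), and the identity $C_N^{[M]}(t)=B_N D_N(t^M,t)B_N^{-1}$ is an identity of polynomial matrices. It can therefore be evaluated at any complex number, in particular at $t=\zeta$. This gives
\[
 C_N^{[M]}(\zeta)=B_N\,\diag\Bigl(\prod_{a\in\alpha}[M]_{\zeta^a}:\alpha\in\OP(N)\Bigr)\,B_N^{-1}.
\]

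The rank formula \eqref{eq:root-rank} then follows at once. Rank is invariant under conjugation by the invertible matrix $B_N$, and the rank of a diagonal matrix is its number of nonzero diagonal entries. A product $\prod_{a\in\alpha}[M]_{\zeta^a}$ vanishes exactly when some factor vanishes, because we are working in the field $\mathbb C$. Nothing here uses that $\zeta$ is a root of unity; that hypothesis only identifies when the factors can actually vanish, namely when $\zeta^a$ is an $M$th root of unity different from $1$.

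For \eqref{eq:t-minus-one}, set $\zeta=-1$. Every part $a$ of an odd partition is odd, so $\zeta^a=-1$ and each factor is $[M]_{-1}=1-1+\cdots\pm1$. This sum equals $1$ if $M$ is odd and $0$ if $M$ is even. For $N>0$ every $\alpha\in\OP(N)$ has at least one part. Hence the diagonal matrix is $I$ when $M$ is odd and $0$ when $M$ is even, and conjugating by $B_N$ preserves both. For $N=0$ the empty product makes the matrix $I$ in either case, which is why the statement requires $N>0$.

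The only real obstacle is the legitimacy of specializing at $\zeta$, since the general $C_N(t,s)$ carries denominators $1-s^n$ that vanish at roots of unity. This is handled by the observation above: after the substitution $(t^M,t)$, the eigenvalues, and hence the entries of the conjugated matrix, are polynomials in $t$, so evaluation commutes with the matrix identity.
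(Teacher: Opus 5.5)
Your proposal is correct and follows the paper's proof. Both conjugate the specialized diagonal matrix $\diag\bigl(\prod_{a\in\alpha}[M]_{\zeta^a}\bigr)$ by the $t$-independent invertible matrix $B_N$, count its nonzero diagonal entries, and use $[M]_{-1}\in\{0,1\}$ according to the parity of $M$. Your extra points are sound details the paper leaves implicit: evaluation at $\zeta$ is justified because the specialized eigenvalues are polynomials, and $N=0$ is an exception.
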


\begin{proof}
The change-of-basis matrix $B_N$ in
\eqref{eq:C-similarity} is independent of $t$ and invertible over
$\mathbb{C}$. After the cyclotomic specialization, the diagonal entry
indexed by $\alpha$ is
\[
 \prod_{a\in\alpha}[M]_{\zeta^a}.
\]
Thus the rank equals the number of nonzero diagonal entries, proving
\eqref{eq:root-rank}.

At $\zeta=-1$, every part $a$ of an odd partition is odd, so
$\zeta^a=-1$. Now $[M]_{-1}=1$ if $M$ is odd and $0$ if $M$ is
even. Hence the diagonal operator is the identity in the first case
and zero in every positive degree in the second case. Conjugating by
$B_N$ proves \eqref{eq:t-minus-one}.
\end{proof}

\section{One-row transitions for \texorpdfstring{$M=2$}{M=2}}
\label{sec:one-row}

We now set $M=2$, so
\[
 A_2(t)=1+t.
\]
The transition identity for a one-row Schur $Q$-function is
\begin{equation}\label{eq:q-n-double-alphabet}
 q_n[X+tX]
 =\sum_{b=0}^{n}t^bq_{n-b}(X)q_b(X),
\end{equation}
which follows immediately by multiplying the two generating series
$Q_X(z)$ and $Q_{tX}(z)$.

\subsection{An inverse two-row identity}

\begin{lemma}\label{lem:inverse-two-row}
For integers $r\geq s\geq0$,
\begin{equation}\label{eq:inverse-two-row}
 q_rq_s
 =Q_{(r,s)}+2\sum_{k=1}^{s}Q_{(r+k,s-k)},
\end{equation}
with the conventions $Q_{(r,0)}=q_r$ and $Q_{(r,r)}=0$.
\end{lemma}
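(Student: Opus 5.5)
We prove \eqref{eq:inverse-two-row} by inverting the defining relation \eqref{eq:two-row-classical}, which is triangular. Fix the total degree $n=r+s$ and think of \eqref{eq:two-row-classical} as expressing $Q_{(r,s)}$ in terms of the products $q_{r+k}q_{s-k}$, $0\le k\le s$. Our plan is to substitute \eqref{eq:two-row-classical} into the right side of \eqref{eq:inverse-two-row} and check that everything except $q_rq_s$ cancels.

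Write $R=Q_{(r,s)}+2\sum_{k=1}^{s}Q_{(r+k,s-k)}$. For $1\le k\le s$ we have $r+k\ge s-k\ge0$, so \eqref{eq:two-row-classical} applies to each term:
\[
 Q_{(r+k,s-k)}=q_{r+k}q_{s-k}+2\sum_{j=1}^{s-k}(-1)^jq_{r+k+j}q_{s-k-j}.
\]
Collect the coefficient of $q_{r+m}q_{s-m}$ for $0\le m\le s$. For $m=0$ it is $1$. For $m\ge1$ there are three contributions. The $Q_{(r,s)}$ term gives $2(-1)^m$. The term $2Q_{(r+m,s-m)}$ gives $2$ through its leading product. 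Each term $2Q_{(r+k,s-k)}$ with $1\le k<m$ gives $4(-1)^{m-k}$. The total is therefore
\[
 2(-1)^m+2+4\sum_{k=1}^{m-1}(-1)^{m-k}.
\]
Since $\sum_{i=1}^{m-1}(-1)^i$ equals $0$ for $m$ odd and $-1$ for $m$ even, the total is $-2+2+0=0$ when $m$ is odd and $2+2-4=0$ when $m$ is even. Hence $R=q_rq_s$.

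The main point needing care is the boundary conventions. When $s-k=0$, the term is $Q_{(r+s,0)}=q_{r+s}$, which is consistent with \eqref{eq:two-row-classical} because the sum there is empty. When $r=s$, the left term $Q_{(r,r)}$ should be $0$ by the stated convention. The formula \eqref{eq:two-row-classical} with $r=s$ does give $0$, since it is the standard identity $q_r^2+2\sum_{k\ge1}(-1)^kq_{r+k}q_{r-k}=0$ coming from $Q_X(z)Q_X(-z)=1$. So the computation above remains valid verbatim, and no products $q_aq_b$ with $a<b$ ever appear.

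Alternatively, we could present the argument as a matrix inversion. The lower unitriangular matrix with entries $2(-1)^{j-i}$ below the diagonal has inverse with entries $2$ below the diagonal. This follows from $(1+2\sum_{i\ge1}(-x)^i)(1+2\sum_{i\ge1}x^i)=\frac{1-x}{1+x}\cdot\frac{1+x}{1-x}=1$, and this generating-function form is the cleanest way to write the proof.
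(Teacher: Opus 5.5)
Your proof is correct and is essentially the paper's argument: substitute the defining formula for the two-row functions into the right-hand side and show, by parity, that the coefficient $2(-1)^m+2+4\sum_{i=1}^{m-1}(-1)^i$ of each $q_{r+m}q_{s-m}$ with $m\ge1$ vanishes. Your extra remarks are sound supplements that the paper does not include: the consistency of the $r=s$ convention with the formula (valid for $r\ge1$ via $Q_X(z)Q_X(-z)=1$), and the Toeplitz generating-function inversion $\frac{1-x}{1+x}\cdot\frac{1+x}{1-x}=1$.
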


\begin{proof}
Insert \eqref{eq:two-row-classical} into the right side of
\eqref{eq:inverse-two-row}. The term $q_rq_s$ occurs once, from
$Q_{(r,s)}$. Fix $j\geq1$. The coefficient of
$q_{r+j}q_{s-j}$ is
\[
 2+2(-1)^j+4\sum_{h=1}^{j-1}(-1)^h.
\]
If $j$ is odd, the finite sum is $0$, and the displayed coefficient is
$2-2=0$. If $j$ is even, the finite sum is $-1$, and the coefficient
is $2+2-4=0$. Hence every term other than $q_rq_s$ cancels.
\end{proof}

\subsection{Closed one-row formula}

\begin{theorem}\label{thm:one-row-M2}
Let $n\geq1$. For every integer $j$ with $0\leq j<n/2$,
\begin{equation}\label{eq:one-row-M2}
 C_{(n),(n-j,j)}^{[2]}(t)
 =t^j(1+t)[n-2j]_t,
\end{equation}
where $[r]_t=1+t+\cdots+t^{r-1}$. If
$\mu\in\SP(n)$ has length greater than two, then
\begin{equation}\label{eq:one-row-length-vanishing}
 C_{(n),\mu}^{[2]}(t)=0.
\end{equation}
Thus
\begin{equation}\label{eq:one-row-full-expansion}
 Q_{(n)}[X+tX]
 =\sum_{0\leq j<n/2}
  t^j(1+t)[n-2j]_t
  Q_{(n-j,j)}(X),
\end{equation}
where $Q_{(n,0)}=Q_{(n)}$.
\end{theorem}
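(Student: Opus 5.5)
Start from \eqref{q-n-double-alphabet}. The products $q_{n-b}q_b$ and $q_bq_{n-b}$ are equal, so group the terms by $s=\min(b,n-b)$ and $r=n-s$. For $s<n/2$ this gives
\[
 q_n[X+tX]=\sum_{0\le s<n/2}(t^s+t^{n-s})\,q_{n-s}q_s
 \;+\;[n\text{ even}]\,t^{n/2}q_{n/2}^2 .
\]
Each product $q_rq_s$ with $r\ge s$ is a linear combination of two-row functions $Q_{(r+k,s-k)}$ by \cref{lem:inverse-two-row}. This already proves \eqref{eq:one-row-length-vanishing}: only $Q_\mu$ with $\ell(\mu)\le2$ occur, and the $Q$-functions form a basis. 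The terms $Q_{(r,r)}=0$ drop out.

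Next, extract the coefficient of $Q_{(n-j,j)}$ for $0\le j<n/2$. In \cref{lem:inverse-two-row} applied to $q_{n-s}q_s$, the function $Q_{(n-j,j)}$ arises with $k=s-j$. It has coefficient $1$ if $s=j$ and $2$ if $j<s\le n/2$. The diagonal term $s=n/2$ has weight $t^{n/2}$ rather than $t^s+t^{n-s}$, which is consistent because $t^{n/2}=\tfrac12(t^s+t^{n-s})$ there. So in all cases
\[
 C^{[2]}_{(n),(n-j,j)}(t)=(t^j+t^{n-j})+2\sum_{j<s<n/2}(t^s+t^{n-s})+[n\text{ even}]\,2t^{n/2}.
\]
Equivalently, it equals $\sum_{b}c_b t^b$ over $j\le b\le n-j$, with $c_b=1$ at the two endpoints $b=j,n-j$ and $c_b=2$ strictly inside.

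Finally, compare with $t^j(1+t)[n-2j]_t=t^j\bigl([n-2j]_t+t[n-2j]_t\bigr)$. This polynomial has coefficient $1$ at $t^j$ and at $t^{n-j}$, and coefficient $2$ at each exponent strictly between them. That matches the previous display, which gives \eqref{eq:one-row-M2}, and \eqref{eq:one-row-full-expansion} follows by assembling the terms. The main obstacle is the bookkeeping at the middle index when $n$ is even. There $q_{n/2}^2$ appears only once, and by \cref{lem:inverse-two-row} it contributes $2Q_{(n/2+k,n/2-k)}$ for $k\ge1$ but nothing for $k=0$, since $Q_{(r,r)}=0$. One must check that this single term $t^{n/2}$ times $2$ yields exactly the interior coefficient $2$ at $t^{n/2}$. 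It does, and because $j<n/2$ this term is never an endpoint.
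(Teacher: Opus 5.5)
Your proposal is correct and follows the paper's own argument. Like the paper, you pair the terms $b$ and $n-b$ in \eqref{eq:q-n-double-alphabet}, expand each $q_{n-s}q_s$ by \cref{lem:inverse-two-row}, and read off coefficient $1$ at $s=j$ and $2$ for $j<s\le n/2$, which gives the sequence $1,2,\ldots,2,1$ on $[j,n-j]$; your explicit handling of the unpaired middle term $t^{n/2}q_{n/2}^2$ just spells out what the paper covers with ``treating the middle term in the even case in the same way.''
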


\begin{proof}
Pair the terms indexed by $b$ and $n-b$ in
\eqref{eq:q-n-double-alphabet}. Every product can be written with the
larger index first. By \cref{lem:inverse-two-row}, each product
$q_{n-b}q_b$ expands only into Schur $Q$-functions of length at most
two. This proves \eqref{eq:one-row-length-vanishing}.

Fix $j<n/2$. In the expansion of $q_{n-b}q_b$ with
$0\leq b\leq\lfloor n/2\rfloor$, the function
$Q_{(n-j,j)}$ occurs with coefficient $1$ when $b=j$, with coefficient
$2$ when $b>j$, and not at all when $b<j$. Restoring the paired
weights $t^b+t^{n-b}$, and treating the middle term in the even case in
the same way, gives
\[
 \begin{aligned}
 C_{(n),(n-j,j)}^{[2]}(t)
 &=t^j+2t^{j+1}+2t^{j+2}+\cdots
   +2t^{n-j-1}+t^{n-j}\\
 &=t^j(1+t)(1+t+\cdots+t^{n-2j-1})\\
 &=t^j(1+t)[n-2j]_t.
 \end{aligned}
\]
This proves \eqref{eq:one-row-M2} and
\eqref{eq:one-row-full-expansion}.
\end{proof}

\begin{corollary}\label{cor:one-row-unimodal}
Every nonzero transition coefficient $C_{(n),\mu}^{[2]}(t)$ is
nonnegative, palindromic, and unimodal.
\end{corollary}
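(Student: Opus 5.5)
By \cref{thm:one-row-M2}, every nonzero coefficient $C^{[2]}_{(n),\mu}(t)$ has $\mu=(n-j,j)$ with $0\leq j<n/2$, and then
\[
 C^{[2]}_{(n),(n-j,j)}(t)=t^j(1+t)[n-2j]_t .
\]
Write $m=n-2j\geq1$. The plan is to check the three properties directly on this closed form, using the explicit coefficient list already written in the proof of \cref{thm:one-row-M2}.

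\emph{Nonnegativity.} This is immediate: $t^j$, $1+t$ and $[m]_t$ all have nonnegative integer coefficients, so their product does too. It also follows from \cref{thm:CM-positivity}.

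\emph{Palindromicity.} Each factor is palindromic about its own centre:
\begin{itemize}
\item $t^j$ is palindromic of darga $2j$ in the convention of \cref{sec:cyclotomic};
\item $1+t$ is palindromic of darga $1$;
\item $[m]_t$ is palindromic of darga $m-1$.
\end{itemize}
Darga adds under products, so the product is palindromic of darga $2j+1+m-1=n$. This agrees with \cref{thm:CM-reciprocity} for $M=2$, $N=n$.

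\emph{Unimodality.} Expand the product explicitly:
\[
 (1+t)[m]_t=1+2t+2t^2+\cdots+2t^{m-1}+t^m .
\]
When $m=1$ this reads $1+t$. The coefficient sequence is $1,2,\dots,2,1$, or $1,1$ when $m=1$. In either case it is weakly increasing and then weakly decreasing, with no internal zeros, so it is unimodal.

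Multiplying by $t^j$ only shifts the sequence. It adds leading zeros at degrees $0,\dots,j-1$ and does not change the shape of the nonzero part.

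The only point needing care is what "unimodal" means here. I would state it as: the sequence of coefficients from the lowest to the highest nonzero degree is unimodal. The leading zeros coming from $t^j$ lie outside this range, and inside it there are no internal zeros.
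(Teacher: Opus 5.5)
Your proposal is correct and follows the paper's proof: both read the three properties directly off the closed form $t^j(1+t)[n-2j]_t$ from \cref{thm:one-row-M2}, whose coefficient sequence on degrees $j$ through $n-j$ is $1,2,\ldots,2,1$ (or $1,1$ when $n-2j=1$). Your darga-additivity argument for palindromicity is only a slightly more formal version of the paper's remark that this sequence is symmetric about degree $n/2$.
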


\begin{proof}
The nonzero coefficients have the form
\[
 t^j(1+t)[n-2j]_t.
\]
If $n-2j=1$, this is $t^j+t^{j+1}$. If $n-2j\geq2$, its coefficient
sequence on the interval from degree $j$ to degree $n-j$ is
\[
 1,2,2,\ldots,2,1.
\]
In both cases the sequence is unimodal and symmetric about degree
$n/2$.
\end{proof}

\section{Exact computation and low-degree examples}
\label{sec:computation}

This section records a direct algorithm for computing the transition
matrices. It uses only recurrences, Pfaffians, and rational linear
algebra.

\subsection{Computing the Schur \texorpdfstring{$Q$}{Q}-to-power-sum matrix}

Differentiate \eqref{eq:one-row-generating} logarithmically:
\[
 zQ_X'(z)
 =2\left(
   \sum_{\substack{j\geq1\\j\text{ odd}}}p_jz^j
  \right)Q_X(z).
\]
Comparing coefficients gives the recurrence
\begin{equation}\label{eq:q-recurrence-computation}
 nq_n
 =2\sum_{\substack{1\leq j\leq n\\j\text{ odd}}}
   p_jq_{n-j},
 \qquad q_0=1.
\end{equation}
Starting from $q_0$, this computes every one-row function in the odd
power-sum basis. Next compute $Q_{(r,s)}$ from
\eqref{eq:two-row-classical}, and compute general $Q_\lambda$ using the
Pfaffian identity \eqref{eq:Q-Pfaffian}.

Fix $N$. Order $\SP(N)$ and $\OP(N)$, and form the square matrix
$B_N$ from
\[
 Q_\lambda
 =\sum_{\alpha\in\OP(N)}
  (B_N)_{\lambda\alpha}p_\alpha.
\]
For general parameters, form the diagonal matrix
\begin{equation}\label{eq:D-computation}
 D_N(t,s)
 =\diag\left(
   \prod_{a\in\alpha}\frac{1-t^a}{1-s^a}
   :\alpha\in\OP(N)
  \right).
\end{equation}
Then
\begin{equation}\label{eq:C-computation}
 C_N(t,s)=B_ND_N(t,s)B_N^{-1}.
\end{equation}
For the cyclotomic matrix, replace the diagonal entry in
\eqref{eq:D-computation} by
\begin{equation}\label{eq:DM-computation}
 \prod_{a\in\alpha}[M]_{t^a}.
\end{equation}
Equations \eqref{eq:C-computation} and
\eqref{eq:DM-computation} provide an exact symbolic computation over
$\mathbb{Q}(t,s)$ or $\mathbb{Q}[t]$.

The following identities are useful consistency checks:
\begin{align*}
 C_N(t,s)C_N(s,r)&=C_N(t,r),\\
 C_N(t,s)^{-1}&=C_N(s,t),\\
 2^{\ell(\mu)}C_{\lambda\mu}(t,s)
 &=2^{\ell(\lambda)}C_{\mu\lambda}(t,s),\\
 \det C_N(t,s)&=\prod_{\alpha\in\OP(N)}r_\alpha(t,s),\\
 C_N^{[ML]}(t)&=C_N^{[M]}(t)C_N^{[L]}(t^M).
\end{align*}

\subsection{Degree three}

Order the strict partitions of $3$ as
\[
 (3),(2,1).
\]
Set
\[
 r_1=\frac{1-t}{1-s},
 \qquad
 r_3=\frac{1-t^3}{1-s^3}.
\]
Using
\[
 Q_{(3)}=\frac{4}{3}p_1^3+\frac{2}{3}p_3,
 \qquad
 Q_{(2,1)}=\frac{4}{3}p_1^3-\frac{4}{3}p_3,
\]
one obtains
\begin{equation}\label{eq:C3-general}
 C_3(t,s)
 =\begin{pmatrix}
 \dfrac{2r_1^3+r_3}{3}
 &\dfrac{r_1^3-r_3}{3}\\[3mm]
 \dfrac{2(r_1^3-r_3)}{3}
 &\dfrac{r_1^3+2r_3}{3}
 \end{pmatrix}.
\end{equation}
Its eigenvalues are $r_1^3$ and $r_3$, corresponding to the odd
partitions $(1,1,1)$ and $(3)$. Its determinant is $r_1^3r_3$, and
the off-diagonal entries satisfy the weighted symmetry
\[
 4C_{(3),(2,1)}(t,s)
 =2C_{(2,1),(3)}(t,s).
\]

For $M=2$, substitute $r_1=1+t$ and $r_3=1+t^3$:
\begin{equation}\label{eq:C3-M2}
 C_3^{[2]}(t)
 =\begin{pmatrix}
 1+2t+2t^2+t^3&t+t^2\\
 2t+2t^2&1+t+t^2+t^3
 \end{pmatrix}.
\end{equation}
Every entry is nonnegative and palindromic with darga $3$.

\subsection{Degree four}

Order $\SP(4)$ as $(4),(3,1)$. Direct computation gives
\begin{equation}\label{eq:C4-M2}
 C_4^{[2]}(t)
 =\begin{pmatrix}
 1+2t+2t^2+2t^3+t^4
 &t+2t^2+t^3\\
 2t+4t^2+2t^3
 &1+3t+4t^2+3t^3+t^4
 \end{pmatrix}.
\end{equation}
The first row agrees with \cref{thm:one-row-M2}. The second row is
forced from the first off-diagonal entry by weighted symmetry, while
the remaining diagonal entry may be obtained from the trace or by the
direct matrix calculation \eqref{eq:C-computation}.

\end{document}